\providecommand{\U}[1]{\protect\rule{.1in}{.1in}}
\newtheorem{theorem}{Theorem}
\newtheorem{acknowledgement}[theorem]{Acknowledgement}
\newtheorem{algorithm}[theorem]{Algorithm}
\newtheorem{definition}[theorem]{Definition}
\newtheorem{lemma}[theorem]{Lemma}
\newtheorem{proposition}[theorem]{Proposition}
\newenvironment{proof}[1][Proof]{\noindent\textbf{#1.} }{\ \rule{0.5em}{0.5em}}
\begin{document}
%
\begin{frontmatter}%


%

\title
{Stability in 3d of a sparse grad-div approximation of the Navier-Stokes equations}%

%

\author{William Layton and Shuxian Xu}%
%

\address
{Department of Mathemaics, University of Pittsburgh, Pittsburgh PA 15260; emails: wjl@pitt.edu and shx34@pitt.edu}%
%

\begin{abstract}%

Inclusion of a term $-\gamma\nabla\nabla\cdot u$, forcing $\nabla\cdot u$ to
be pointwise small, is an effective tool for improving mass conservation in
discretizations of incompressible flows. However, the added grad-div term
couples all velocity components, decreases sparsity and increases the
condition number in the linear systems that must be solved every time step. To
address these three issues various sparse grad-div regularizations and a
modular grad-div method have been developed. We develop and analyze herein a
synthesis of a fully decoupled, parallel sparse grad-div method of Guermond
and Minev with the modular grad-div method. Let $G^{\ast}=-diag(\partial
_{x}^{2},\partial_{y}^{2},\partial_{z}^{2})$ denote the diagonal of
$G=-\nabla\nabla\cdot$, and $\alpha\geq0$\ an adjustable parameter. The 2-step
method considered is%
\begin{align}
1  & :\frac{\widetilde{u}^{n+1}-u^{n}}{k}+u^{n}\cdot\nabla\widetilde{u}%
^{n+1}+\nabla p^{n+1}-\nu\Delta\widetilde{u}^{n+1}=f\text{ \& }\nabla
\cdot\widetilde{u}^{n+1}=0,\nonumber\\
2  & :\left[  \frac{1}{k}I+(\gamma+\alpha)G^{\ast}\right]  u^{n+1}=\frac{1}%
{k}\widetilde{u}^{n+1}+\left[  (\gamma+\alpha)G^{\ast}-\gamma G\right]
u^{n}.\nonumber
\end{align}
We prove its unconditional, nonlinear, long time stability in $3d$ for
$\alpha\geq0.5\gamma$. The analysis also establishes that the method controls
the persistent size of $||\nabla\cdot u||$ in general and controls the
transients in $||\nabla\cdot u||$ when $u(x,0)=0$ and $f(x,t)\neq0$ provided
$\alpha>0.5\gamma$. Consistent numerical tests are presented.%

\end{abstract}%
%

\begin{keyword}%


grad-div \sep sparse grad-div \sep Navier-Stokes \sep mass conservation
\sep modular grad-div



\MSC[2008] 65M60 \sep 76D05%

\end{keyword}%
%

\end{frontmatter}%



\section{Introduction}

We present and prove the long time, nonlinear stability of a fully uncoupled,
modular, sparse grad-div (SGD) finite element methods (FEM), approximating the
incompressible Navier-Stokes equations (NSE)%
\[
u_{t}+u\cdot\nabla u+\nabla p-\nu\Delta u=f(x,t)\text{ and }\nabla\cdot u=0.
\]
The stability analysis also delineates how the method controls $||\nabla\cdot
u||$. Sparse grad-div methods are one slice of research on improving mass
conservation in finite element methods. The complementary slice, currently
giving strong results, uses exactly divergence-free elements. These two,
others and their interconnections are surveyed in \cite{JLMNR16}. The first
sparse grad-div method considered herein is from Guermond and Minev
\cite{GM17}, for which we sharpen their stability result. The second is a new
but natural synthesis with the modular grad-div method of \cite{FLR18}. The
flow domain $\Omega$ is a bounded open set in ${{{\mathbb{R}}}}^{3}$ with no
slip boundary conditions $u=0$ on $\partial\Omega$ . Here $u\in{{{\mathbb{R}}%
}}^{3}$ is the velocity, $p\in{{{\mathbb{R}}}}$ is the pressure, $\nu$ is the
kinematic viscosity, and $f\in{{{\mathbb{R}}}}^{3}$ is the external force. Let
$\gamma$\ denote the (preset) grad-div parameter. Following Olshanskii
\cite{O02}, the standard grad-div approximation (with a simple time
discretization for concreteness) is the space discretization of%
\begin{gather}
\frac{u^{n+1}-u^{n}}{k}+u^{n}\cdot\nabla u^{n+1}+\nabla p^{n+1}-\nu\Delta
u^{n+1}-\gamma\nabla\nabla\cdot u^{n+1}=f(t^{n+1}),\nonumber\\
\text{and}{{\text{ \ \ \ }}}\nabla\cdot u^{n+1}=0.
\end{gather}
If (as here) neither the boundary conditions nor the viscosity depends on the
fluid stresses, the added grad-div term is the only term coupling all velocity
components. For $\gamma$\ large, the condition number of the linear system
increases \cite{LM16}. Even for moderate $\gamma$, penalizing pointwise
violation of incompressibility and asking $\nabla\cdot u$\ to be orthogonal to
the pressure space has been observed to cause solver issues \cite{FLR18}. To
eliminate this coupling, reduce memory requirements, speed parallel solution
and improve the robustness of iterative methods for the resulting linear
system, several sparse grad-div methods have been devised. To specify the
variant considered herein, let $G$ denote $-\nabla\nabla\cdot$ and $G^{\ast}%
$\ to be the diagonal of $G$%

\[
G:=-%
\begin{bmatrix}
\partial_{xx} & \partial_{xy} & \partial_{xz}\\
\partial_{yx} & \partial_{yy} & \partial_{yz}\\
\partial_{zx} & \partial_{zy} & \partial_{zz}%
\end{bmatrix}
\text{ \& }G^{\ast}:=-%
\begin{bmatrix}
\partial_{xx} & 0 & 0\\
0 & \partial_{yy} & 0\\
0 & 0 & \partial_{zz}%
\end{bmatrix}
.
\]
The synthesis of a sparse grad-div method of Guermond and Minev \cite{GM17}
with the modular grad-div method of \cite{FLR18} is as follows. Suppressing
the space discretization, given $u^{n}$ two approximations, $\widetilde
{u}^{n+1}$ and $u^{n+1}$, at the next time step are calculated by
\begin{align}
1  & :\frac{\widetilde{u}^{n+1}-u^{n}}{k}+u^{n}\cdot\nabla\widetilde{u}%
^{n+1}+\nabla p^{n+1}-\nu\Delta\widetilde{u}^{n+1}=f\text{ \& }\nabla
\cdot\widetilde{u}^{n+1}=0,\nonumber\\
& and\label{eq:ModularSGD1}\\
2  & :\left[  \frac{1}{k}I+(\gamma+\alpha)G^{\ast}\right]  u^{n+1}=\frac{1}%
{k}\widetilde{u}^{n+1}+\left[  (\gamma+\alpha)G^{\ast}-\gamma G\right]
u^{n}.\nonumber
\end{align}
The linear solve in Step 2 uncouples into 3 smaller and constant in time
systems (one for each velocity component). For example, the first sub-system,
for the $x$ component of velocity, is%
\[
\left[  I-k(\gamma+\alpha)\frac{\partial^{2}}{\partial x^{2}}\right]
u_{1}^{n+1}=RHS_{1}\text{.}%
\]
With simple discretizations, structured meshes, mass lumping and axi-parallel
domains the above $3$ sub-systems can even be written as one tridiagonal solve
for the unknowns on each mesh line. The precise presentation, including the
FEM discretization of their space derivatives, is given in Section 2. The
condition number of the coefficient matrix of Step 2 is proven in the appendix
(under typical assumptions for this estimation) to have a condition number
that does not blow up as $\gamma+\alpha\rightarrow\infty$, but changes from
parabolic conditioning to elliptic conditioning:
\[
cond_{2}(A)\leq C\frac{h^{2}+k(\gamma+\alpha)}{1+k(\gamma+\alpha)}h^{-2}.
\]
The usual $L^{2}(\Omega)$\ norm is denoted $||\cdot||$. The following
summarizes the essential result.

\begin{theorem}
Let $\gamma\geq0$. The method (\ref{eq:ModularSGD1}) is unconditionally,
nonlinearly, long time stable in $3d$ if $\alpha\geq0.5\gamma$ and in $2d$ if
$\alpha\geq0$.\linebreak\ Further, if $u^{0},\nabla u^{0}\in L^{2}(\Omega)$
and $||f(t^{n})||\leq C<\infty$ and $\alpha>0.5\gamma$\ we have $\nabla\cdot
u^{n}\rightarrow0$ as $\gamma\rightarrow\infty$ in time-average
\[
\lim\sup_{N\rightarrow\infty}\frac{1}{N}\sum_{n=1}^{N}||\nabla\cdot
u^{n}||^{2}\leq C\gamma^{-1}.
\]
If $u^{0}=0$, then for all $N$%
\[
\frac{1}{N}\sum_{n=0}^{N}||\nabla\cdot u^{n}||^{2}\leq C\gamma^{-1}.
\]

\end{theorem}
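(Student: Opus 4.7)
The plan is a discrete energy analysis. First I would test equation~1 with $k\widetilde u^{n+1}$: the pressure drops out by inf-sup compatibility ($\nabla\cdot\widetilde u^{n+1}=0$), the trilinear term drops by its skew-symmetric discretization, and polarization of the time difference yields
\[
\tfrac12\|\widetilde u^{n+1}\|^2-\tfrac12\|u^n\|^2+\tfrac12\|\widetilde u^{n+1}-u^n\|^2+k\nu\|\nabla\widetilde u^{n+1}\|^2=k(f^{n+1},\widetilde u^{n+1}).
\]
Next I rewrite step~2 as $(u^{n+1}-\widetilde u^{n+1})/k+(\gamma+\alpha)G^*(u^{n+1}-u^n)+\gamma Gu^n=0$ and test with $ku^{n+1}$. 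With $|v|_*^2:=\sum_i\|\partial_iv_i\|^2=(G^*v,v)$ and $(Gv,w)=(\nabla\cdot v,\nabla\cdot w)$, polarizing the three resulting inner products and adding to the step-1 identity cancels the $\|\widetilde u^{n+1}\|^2$ terms. Writing $E^n:=\tfrac12\|u^n\|^2+\tfrac{k(\gamma+\alpha)}{2}|u^n|_*^2$, the combined discrete balance is
\[
E^{n+1}-E^n+T_n+k\nu\|\nabla\widetilde u^{n+1}\|^2+\tfrac{k\gamma}{2}\bigl(\|\nabla\cdot u^{n+1}\|^2+\|\nabla\cdot u^n\|^2\bigr)+R_n=k(f^{n+1},\widetilde u^{n+1}),
\]
with $T_n:=\tfrac12\|\widetilde u^{n+1}-u^n\|^2+\tfrac12\|u^{n+1}-\widetilde u^{n+1}\|^2\ge 0$ and
\[
R_n:=\tfrac{k(\gamma+\alpha)}{2}|u^{n+1}-u^n|_*^2-\tfrac{k\gamma}{2}\|\nabla\cdot(u^{n+1}-u^n)\|^2.
\]

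The decisive step, and the main obstacle, is to establish that the combined dissipation has the right sign under the stated thresholds ($\alpha\ge 0.5\gamma$ in 3d, $\alpha\ge 0$ in 2d). The pointwise bound $\|\nabla\cdot w\|^2\le d|w|_*^2$ (sharp when the diagonal partials agree) would force $\alpha\ge(d-1)\gamma$, which is twice as restrictive as the theorem claims. Reaching the advertised threshold therefore requires either an inequality between $\|\nabla\cdot w\|^2$ and $|w|_*^2$ that exploits the symmetry $(\partial_iw_i,\partial_jw_j)=(\partial_jw_i,\partial_iw_j)$ afforded by homogeneous Dirichlet data, or a weighted regrouping of the negative contribution of $R_n$ against the accumulation $\tfrac{k\gamma}{2}\sum(\|\nabla\cdot u^{n+1}\|^2+\|\nabla\cdot u^n\|^2)$ using Young's inequality with parameters tuned to the exact coefficients. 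Earning the constant $0.5$ precisely, rather than $1$ or $2$, is the technical crux.

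Once $R_n$ is controlled, the rest is routine. Summing from $n=0$ to $N-1$ telescopes $E^n$ and produces the accumulation $k\gamma\sum\|\nabla\cdot u^n\|^2$ on the left. Bounding the forcing by Cauchy--Schwarz, Poincar\'e, and Young (absorbing a fraction of the viscous dissipation) delivers unconditional long-time stability and
\[
k\gamma\sum_{n=0}^{N}\|\nabla\cdot u^n\|^2\le E^0+C(\nu,\Omega)\,k\sum_{n=0}^{N-1}\|f^{n+1}\|_{-1}^2.
\]
Dividing by $Nk\gamma$ and taking $\limsup_{N\to\infty}$ eliminates $E^0/(Nk\gamma)$, giving $\limsup N^{-1}\sum\|\nabla\cdot u^n\|^2\le C\gamma^{-1}$; the strict inequality $\alpha>0.5\gamma$ ensures that the $R_n$ bookkeeping leaves a strictly positive leftover. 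When $u^0=0$, one has $E^0=0$ at the outset, so no $\limsup$ is needed and the bound holds for every finite $N$.
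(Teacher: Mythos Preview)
Your setup through the combined energy identity is correct and matches the paper's: test Step~1 with $\widetilde u^{n+1}$, test Step~2 with $u^{n+1}$, add, and isolate the brace term. Where your proposal falls short is precisely where you say it does: you identify the crux --- showing that the combined divergence dissipation plus $R_n$ is nonnegative for $\alpha\ge 0.5\gamma$ --- but you do not carry it out. You offer two candidate routes and then write ``Once $R_n$ is controlled, the rest is routine.'' That leaves the theorem unproved; the whole content of the $3d$ result is exactly this step.

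Of your two suggested routes, the first is a dead end. The integration-by-parts identity $(\partial_iw_i,\partial_jw_j)=(\partial_jw_i,\partial_iw_j)$ does not improve the bound $\|\nabla\cdot w\|^2\le 3|w|_*^2$, which is sharp regardless of boundary conditions (take the three diagonal partials equal). The paper does not use it. Your second route, by contrast, is exactly right and the computation is short. From $|w|_*^2\ge\tfrac13\|\nabla\cdot w\|^2$ (this is the paper's Lemma on $\mathcal B$) one gets, for $w=u^{n+1}-u^n$,
\[
R_n\;\ge\;\frac{k(\alpha-2\gamma)}{6}\,\|\nabla\cdot w\|^2\;\ge\;-\frac{k(2\gamma-\alpha)}{3}\bigl(\|\nabla\cdot u^{n+1}\|^2+\|\nabla\cdot u^n\|^2\bigr),
\]
so that
\[
\frac{k\gamma}{2}\bigl(\|\nabla\cdot u^{n+1}\|^2+\|\nabla\cdot u^n\|^2\bigr)+R_n\;\ge\;\frac{k(2\alpha-\gamma)}{6}\bigl(\|\nabla\cdot u^{n+1}\|^2+\|\nabla\cdot u^n\|^2\bigr),
\]
which is nonnegative precisely when $\alpha\ge 0.5\gamma$. (Incidentally, the naive threshold is $\alpha\ge 2\gamma$, four times too large, not ``twice.'') The paper organizes the same inequality slightly differently: it packages $(\gamma+\alpha)|w|_*^2-\tfrac{\gamma+\alpha}{3}\|\nabla\cdot w\|^2\ge 0$ into a positive semi-definite form $\mathcal B^*$, applies the polarization $xy=-\tfrac12(x^2+y^2-(x+y)^2)$ to the cross term $(\nabla\cdot u^n,\nabla\cdot u^{n+1})$, and shifts $\tfrac{2\gamma-\alpha}{6}\|\nabla\cdot u^n\|^2$ into the energy rather than the dissipation. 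Their version additionally exhibits $\tfrac{2\gamma-\alpha}{6}\|\nabla\cdot(u^{n+1}+u^n)\|^2$ as a separate nonnegative dissipation, which is what gives the $\alpha=0.5\gamma$ conclusion about two-level averages. Your regrouping and theirs are algebraically equivalent rearrangements of the same two ingredients; either completes the proof, but you must actually write one of them down. The remainder of your argument (telescoping, bounding the forcing, $\limsup$, and the $u^0=0$ case) is fine.
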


\subsection{Related work}

It has been recognized for a while now that the usual velocity-pressure FEM
can result in $\mathcal{O}(1)$ errors in mass conservation, $||\nabla\cdot
u||=\mathcal{O}(1)$, e.g., John, Linke, Merdon and Neilan \cite{JLMNR16} and
Belenli, Rebholz and Tone \cite{ART15}. This $||\nabla\cdot u||=\mathcal{O}%
(1)$ is clearly evident in the $\gamma=\alpha=0$\ tests in Section 5. The cure
for this is added grad-div stabilization, a simple idea with strong positive
consequences. Its origin seems to be in SUPG type local residual stabilization
methods, Brooks and Hughes \cite{BH82}, and the idea of adding an operator
positive definite on the constraint set in optimization. Detailed analysis of
the discretization, including an added grad-div term, can be found in papers
including Case, Ervin, Linke and Rebholz \cite{CE11}, Olshanskii and Reusken
\cite{OR04}, Olshanskii \cite{O02}, Jenkins, John, Linke and Rebholz
\cite{JJLR14}, Braack, B\"{u}rman, John and Lube \cite{BBJL07}, Layton,
Manica, Neda, Olshanskii and Rebholz \cite{LMNOR12}, Galvin, Linke, Rebholz
and Wilson \cite{GLRW12} and Connors, Jenkins and Rebholz \cite{CJR11}.
Preselection of the grad-div parameter $\gamma$\ is treated in many places
such as Heavner \cite{H17} and self-adaptive selection recently in Xie
\cite{X21}.

Linke and Rebholz \cite{LR13} developed the first sparse grad-div method.
Their method contributes no consistency error. It improves solver performance
\cite{BLR14}, \cite{LR13}, reducing coupling (in $3d$) from 3 components to 2
components followed sequentially by a 1 component solve. Since Linke and
Rebholz achieve this with a modified pressure, stability in $3d$ is automatic,
and higher-order time stepping is also available. Subsequent sparse grad-div
methods of Guermond and Minev \cite{GM17} achieved greater uncoupling at the
expense of increased consistency error and reduced options for time stepping.
Let $G$ denote $-\nabla\nabla\cdot$. Their first method selected $G^{\ast}%
$\ to be the upper triangular part of $G$ and lagged the remainder:
$\nabla\cdot u^{n+1}=0$\ and%
\begin{gather*}
\frac{u^{n+1}-u^{n}}{k}+u^{n}\cdot\nabla u^{n+1}+\nabla p^{n+1}-\nu\Delta
u^{n+1}+\\
\gamma G^{\ast}u^{n+1}-\gamma\lbrack G^{\ast}-G]u^{n}=f(t^{n+1}).
\end{gather*}
This method, sequentially uncoupling velocity components, was proved stable in
$2d$ and observed but not proven stable in $3d$. Their second sparse grad-div
method, equation (3.8) Section 3.3, uncoupled velocity components in parallel
as follows. For $\alpha$\ a free parameter select $G^{\ast}$\ to be the
diagonal of $G.$ The second method of Guermond and Minev \cite{GM17} is
$\nabla\cdot u^{n+1}=0$\ and%
\begin{gather}
\frac{u^{n+1}-u^{n}}{k}+u^{n}\cdot\nabla u^{n+1}+\nabla p^{n+1}-\nu\Delta
u^{n+1}+\nonumber\\
(\gamma+\alpha)G^{\ast}u^{n+1}-\left[  (\gamma+\alpha)G^{\ast}-\gamma
G\right]  u^{n}=f(t^{n+1}).\label{eq:SGD1}%
\end{gather}
In Theorem 3.3 they prove stability in $3d$ for $\alpha\geq2\gamma.$ The proof
given in Section 2 for the modular sparse grad-div method yields the following
sharpening of their stability result.

\begin{theorem}
Under the same conditions as Theorem 1, the conclusions of Theorem 1 for
method (\ref{eq:ModularSGD1}) hold as well for method (\ref{eq:SGD1}).
\end{theorem}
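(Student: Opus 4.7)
I would run the same energy argument that yields Theorem 1, observing that when either scheme is tested in $L^{2}$ against $u^{n+1}$ one recovers the identical grad-div block. First I would take the inner product of the momentum equation of (\ref{eq:SGD1}) with $u^{n+1}$: the pressure term drops because $\nabla\cdot u^{n+1}=0$ discretely, the skew-symmetrically evaluated convective term vanishes, the viscous term yields $\nu\|\nabla u^{n+1}\|^{2}$, and the time derivative gives the standard $\tfrac{1}{2k}(\|u^{n+1}\|^{2}-\|u^{n}\|^{2}+\|u^{n+1}-u^{n}\|^{2})$. What remains is
\[
(\gamma+\alpha)\|\nabla^{\ast}u^{n+1}\|^{2}-(\gamma+\alpha)(\nabla^{\ast}u^{n},\nabla^{\ast}u^{n+1})+\gamma(\nabla\cdot u^{n},\nabla\cdot u^{n+1}),
\]
which is exactly the block that appears in Theorem 1 after the step 1 and step 2 energy identities for (\ref{eq:ModularSGD1}) are combined.

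Next I would polarize the $G^{\ast}$ pieces, obtaining the telescoping difference $\tfrac{\gamma+\alpha}{2}(\|\nabla^{\ast}u^{n+1}\|^{2}-\|\nabla^{\ast}u^{n}\|^{2})$ and a positive jump $\tfrac{\gamma+\alpha}{2}\|\nabla^{\ast}(u^{n+1}-u^{n})\|^{2}$. For the $\gamma G u^{n}$ term I would write $\nabla\cdot u^{n}=\nabla\cdot u^{n+1}-\nabla\cdot(u^{n+1}-u^{n})$, producing $\gamma\|\nabla\cdot u^{n+1}\|^{2}$ plus a sign-indefinite cross term to which I apply Young's inequality with a free parameter $\epsilon>0$, followed by the pointwise inequality $\|\nabla\cdot v\|^{2}\leq d\,\|\nabla^{\ast}v\|^{2}$ (with $d=3$ in $3d$, $d=2$ in $2d$) to re-express the jump contribution in the $\nabla^{\ast}$-norm. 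The critical choice is $\epsilon=(\gamma+\alpha)/(d\gamma)$: it makes the re-expressed jump equal to $\tfrac{\gamma+\alpha}{2}\|\nabla^{\ast}(u^{n+1}-u^{n})\|^{2}$, exactly cancelling the surplus from the $G^{\ast}$ polarization, and leaves the coefficient $\tfrac{\gamma(2\alpha-(d-2)\gamma)}{2(\gamma+\alpha)}$ on $\|\nabla\cdot u^{n+1}\|^{2}$, which is $\geq 0$ precisely under $\alpha\geq 0.5\gamma$ in $3d$ and $\alpha\geq 0$ in $2d$.

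What is left is the clean one-step bound
\[
\tfrac{1}{2k}(\|u^{n+1}\|^{2}-\|u^{n}\|^{2})+\tfrac{\gamma+\alpha}{2}(\|\nabla^{\ast}u^{n+1}\|^{2}-\|\nabla^{\ast}u^{n}\|^{2})+\tfrac{1}{2k}\|u^{n+1}-u^{n}\|^{2}+\nu\|\nabla u^{n+1}\|^{2}+\tfrac{\gamma(2\alpha-(d-2)\gamma)}{2(\gamma+\alpha)}\|\nabla\cdot u^{n+1}\|^{2}\leq(f,u^{n+1}).
\]
From here Young-plus-Poincar\'e absorbs $(f,u^{n+1})$ into the viscous dissipation, and telescoping in $n$ together with the standard uniform bound on $\sum_{n}k\|f(t^{n})\|^{2}$ yields unconditional long-time stability. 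When $\alpha>0.5\gamma$ the coefficient of $\|\nabla\cdot u^{n+1}\|^{2}$ is of order $\gamma$, so summing from $n=1$ to $N$ and dividing by $N$ produces the $O(\gamma^{-1})$ time-averaged divergence estimate; when $u^{0}=0$ all initial-data terms vanish and the bound holds for every finite $N$. I expect the only delicate point to be the $\epsilon$-optimization: too small and the $\nabla^{\ast}$-jump is not absorbed, too large and the $\|\nabla\cdot u^{n+1}\|^{2}$ coefficient becomes negative; only the balance $\epsilon=(\gamma+\alpha)/(d\gamma)$ produces the sharp threshold $\alpha\geq 0.5\gamma$ that improves the original Guermond-Minev condition $\alpha\geq 2\gamma$.
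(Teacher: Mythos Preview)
Your argument is correct and recovers all the conclusions of Theorem 1 for method (\ref{eq:SGD1}). You are right that testing (\ref{eq:SGD1}) against $u^{n+1}$ produces exactly the same grad--div block
\[
(\gamma+\alpha)\|\nabla^{\ast}u^{n+1}\|^{2}-(\gamma+\alpha)(\nabla^{\ast}u^{n},\nabla^{\ast}u^{n+1})+\gamma(\nabla\cdot u^{n},\nabla\cdot u^{n+1})
\]
that the paper obtains after summing the Step 1 and Step 2 identities for the modular scheme; this observation is in fact the entire content of the paper's proof of Theorem 2, which simply points back to the Proposition.

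Where you diverge is in how you handle this block. The paper introduces the auxiliary semi-inner product $\mathcal{B}^{\ast}(u,v)=\mathcal{B}(u,v)-\tfrac{1}{3}(\alpha-2\gamma)(\nabla\cdot u,\nabla\cdot v)$, proves $\mathcal{B}^{\ast}(v,v)\geq 0$, and then splits the block into $\mathcal{B}^{\ast}(u^{n+1}-u^{n},u^{n+1})$ plus a divergence remainder, applying the polarization identity to each piece. This keeps the argument as a chain of \emph{identities} and yields the explicit energy--dissipation pair $(E^{n},D^{n})$ with the extra dissipative terms $\tfrac{1}{2}\|u^{n+1}-u^{n}\|_{\mathcal{B}^{\ast}}^{2}$ and $\tfrac{2\gamma-\alpha}{6}\|\nabla\cdot(u^{n+1}+u^{n})\|^{2}$. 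Your route---polarize only the $G^{\ast}$ part, then absorb the sign-indefinite cross term via Young's inequality with the optimized weight $\epsilon=(\gamma+\alpha)/(d\gamma)$ and the pointwise bound $\|\nabla\cdot v\|^{2}\leq d\,\|\nabla^{\ast}v\|^{2}$---is more elementary and gives the same sharp threshold $\alpha\geq 0.5\gamma$ in one stroke (and the $2d$ threshold $\alpha\geq 0$ simultaneously). The price is that the $\nabla^{\ast}$-jump is exactly cancelled rather than retained, and at the borderline $\alpha=0.5\gamma$ your divergence coefficient vanishes, so you do not recover the paper's control of $\|\nabla\cdot(u^{n+1}+u^{n})\|$ there. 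Since Theorem 1 as stated only claims divergence control for $\alpha>0.5\gamma$, this loss is irrelevant for Theorem 2; but it is worth noting that the paper's identity-based decomposition squeezes out slightly more structure at the critical parameter.
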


Modular (non-sparse) grad-div was introduced in \cite{FLR18}, where compared
to standard grad-div methods, dramatic reductions in run times and increases
in robustness were observed. Similar ideas for the grad-div operator were
developed in Minev and Vabishchevich \cite{MV}. Finding $\mathcal{O}(k^{2})$
extensions of (\textsc{SparseGD}) with the same unconditional stability is
nontrivial. The only step we are aware of (aside from defect/deferred
corrections wrapped around the first-order approximation used by Guermond and
Minev \cite{GM17}) is Trenchea\cite{T16}.

\section{Analysis of modular sparse grad-div}

This section makes the method and result precise and proves stability for
$\alpha\geq0.5\gamma$ and control of $\nabla\cdot u$\ for $\alpha>0.5\gamma$
for the modular sparse grad-div algorithm. This work builds on Guermond and
Minev \cite{GM17}, the work on modular grad-div in \cite{FLR18} and Rong and
Fiordilino \cite{RF20}, and the numerical tests of a related method in Demir
and Kaya \cite{DK19}. We suppress the traditional sub- or super-scripts "$h$"
in finite element formulations. Let $(X,Q)\subset\left(  \mathring{H}%
^{1}(\Omega)^{3},L_{0}^{2}(\Omega)\right) $ denote conforming, div-stable FEM
velocity-pressure spaces. To simplify the notation, define the following
bilinear forms and semi-norms.

\begin{definition}
In $3d$ (with the obvious modification for $2d$), define the symmetric
bilinear forms%
\[%
\begin{array}
[c]{ccc}%
\mathcal{A}(u,v) & := & (\gamma+\alpha)\left[  \left(  u_{1,x},v_{1,x}\right)
+\left(  u_{2,y},v_{2,y}\right)  +\left(  u_{3,z},v_{3,z}\right)  \right] ,\\
\mathcal{B}(u,v) & := & \mathcal{A}(u,v)-\gamma(\nabla\cdot u,\nabla\cdot
v),\\
\mathcal{B}^{\ast}(u,v) & := & \mathcal{B}(u,v)-\frac{1}{3}(\alpha
-2\gamma)(\nabla\cdot u,\nabla\cdot v).
\end{array}
\]
If $a(u,v)$ is a symmetric, positive semi-definite bilinear form on $X$ we
denote its induced semi-norm by $||v||_{a}^{2}=a(v,v).$
\end{definition}

The nonlinear term below has been explicitly skew-symmetrized and treated
linearly implicitly below. Other choices are possible within the analysis we
present, such as the EMAC formulation \cite{OR20}.

\begin{algorithm}
\lbrack Modular SGD]Given the initial velocity $u^{0}$ and grad-div parameter
$\gamma>0$, choose $\alpha\geq0$.

Step 1: Given $u^{n}\in X$, find $(\tilde{u}^{n+1},p^{n+1})\in(X,Q)$, for all
$(v,q)\in(X,Q)$ satisfying:%
\begin{gather*}
\left(  \frac{\tilde{u}^{n+1}-{\ u}^{n}}{k},v\right)  +\frac{1}{2}{\ }\left(
{u}^{n}\cdot\nabla\tilde{u}^{n+1},v\right)  -\frac{1}{2}{\ }\left(  {u}%
^{n}\cdot\nabla v,\tilde{u}^{n+1}\right) \\
+\nu\left(  \nabla\tilde{u}^{n+1},\nabla v\right)  -\left(  p^{n+1}%
,\nabla\cdot v\right)  =\left(  f^{n+1},v\right)  ,\\
\text{and }\left(  \nabla\cdot\tilde{u}^{n+1},q\right)  =0.
\end{gather*}

Step 2: Given $\tilde{u}^{n+1}\in X$, find $u^{n+1}\in X$, for all $v\in X$
satisfying:%
\[
(u^{n+1},v)+k\mathcal{A}(u^{n+1},v)=(\tilde{u}^{n+1},v_{h})+k\mathcal{B}%
(u^{n},v).
\]

\end{algorithm}

Step 1 uses the standard implicit method to calculate $\tilde{u}^{n+1}$ at
$t=t^{n+1}$. Step 2 adds the sparse grad-div stabilization term to $\tilde
{u}^{n+1}$ to get $u^{n+1}$. This separation of velocity approximations to one
where $\nabla\cdot u{\Large \bot}Q$ and one where $||\nabla\cdot u||$ is small
may be a reason for the increased robustness observed in \cite{FLR18}. For all
time steps, the uncoupled, same block diagonal matrix $I+k(\gamma
+\alpha)G^{\ast}$ arises in Step 2.

We begin with a lemma.

\begin{lemma}
Let $\gamma>0,\alpha\geq0$, then%
\[
\mathcal{B}(v,v)\geq\frac{\alpha-2\gamma}{3}||\nabla\cdot v||^{2}.
\]
Thus, if $\alpha\geq2\gamma>0$\ then $\mathcal{B}(v,v)\geq0$ for all $v\in
X$.\newline Otherwise, if $\alpha\geq0$ then, for all $v\in X,$%
\begin{equation}
\mathcal{B}^{\ast}(v,v)=\mathcal{B}(v,v)-\frac{1}{3}(\alpha-2\gamma
)||\nabla\cdot v||^{2}\geq0.\label{eq:BstarPositive}%
\end{equation}

\end{lemma}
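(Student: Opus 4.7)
The plan is to unwind the definitions and reduce everything to a single elementary inequality relating $\|\nabla\cdot v\|^2$ to the sum of squared diagonal derivatives.

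First I would expand $\mathcal{B}(v,v)$ directly from the definitions. Because $\mathcal{A}$ involves only the \emph{diagonal} derivatives $v_{1,x}$, $v_{2,y}$, $v_{3,z}$, one obtains
\begin{equation*}
\mathcal{B}(v,v) = (\gamma+\alpha)\bigl(\|v_{1,x}\|^{2}+\|v_{2,y}\|^{2}+\|v_{3,z}\|^{2}\bigr) - \gamma\,\|\nabla\cdot v\|^{2}.
\end{equation*}
The task then reduces to bounding the sum of squared diagonal derivatives from below by a multiple of $\|\nabla\cdot v\|^{2}$.

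The central step is the pointwise scalar inequality $(a+b+c)^{2}\leq 3(a^{2}+b^{2}+c^{2})$, applied with $a=v_{1,x}$, $b=v_{2,y}$, $c=v_{3,z}$ and then integrated over $\Omega$. This gives
\begin{equation*}
\|\nabla\cdot v\|^{2} \leq 3\bigl(\|v_{1,x}\|^{2}+\|v_{2,y}\|^{2}+\|v_{3,z}\|^{2}\bigr),
\end{equation*}
equivalently $\|v_{1,x}\|^{2}+\|v_{2,y}\|^{2}+\|v_{3,z}\|^{2}\geq \tfrac{1}{3}\|\nabla\cdot v\|^{2}$. Substituting this into the identity for $\mathcal{B}(v,v)$ yields
\begin{equation*}
\mathcal{B}(v,v) \geq \tfrac{\gamma+\alpha}{3}\|\nabla\cdot v\|^{2} - \gamma\|\nabla\cdot v\|^{2} = \tfrac{\alpha-2\gamma}{3}\|\nabla\cdot v\|^{2},
\end{equation*}
which is the claimed inequality. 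In the $2d$ case one repeats the argument with the elementary bound $(a+b)^{2}\leq 2(a^{2}+b^{2})$, which (as reflected in Theorem~1) explains why the restriction on $\alpha$ relaxes in two dimensions.

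The remaining assertions are immediate. If $\alpha\geq 2\gamma$, the right-hand side is nonnegative, so $\mathcal{B}(v,v)\geq 0$. For general $\alpha\geq 0$, subtracting $\tfrac{1}{3}(\alpha-2\gamma)\|\nabla\cdot v\|^{2}$ from $\mathcal{B}(v,v)$ precisely cancels the lower bound, giving $\mathcal{B}^{\ast}(v,v)\geq 0$, which is (\ref{eq:BstarPositive}). There is no real obstacle here; the only thing worth being careful about is tracking the constant $3$ (or $2$ in $2d$) coming from Cauchy--Schwarz so that the threshold $\alpha=2\gamma$ in the hypothesis of Theorem~1 is identified cleanly.
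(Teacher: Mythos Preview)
Your proof is correct and follows essentially the same route as the paper: both expand $\mathcal{B}(v,v)$ from the definitions and invoke the elementary inequality $(a+b+c)^2\le 3(a^2+b^2+c^2)$ (integrated over $\Omega$) to bound the diagonal-derivative sum below by $\tfrac13\|\nabla\cdot v\|^2$, after which the two corollaries are immediate. Your write-up is in fact slightly more direct than the paper's, which inserts an intermediate splitting of the $-\gamma\|\nabla\cdot v\|^2$ term before applying the same inequality.
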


\begin{proof}
The second and third claim follow from the first. For the first, since
$||\nabla\cdot v||^{2}=||v_{1,x}+v_{2,y}+v_{3,z}|||^{2}$ $\leq3||v_{1,x}%
||^{2}+3||v_{2,y}||^{2}+3||v_{3,z}||^{2}$, we have
\begin{align*}
\mathcal{B}(v,v)  & =(\gamma+\alpha)\left[  ||v_{1,x}||^{2}+||v_{2,y}%
||^{2}+||v_{3,z}||^{2}\right]  -\gamma||\nabla\cdot v||^{2}\\
& =(\gamma+\alpha)\left[  ||v_{1,x}||^{2}+||v_{2,y}||^{2}+||v_{3,z}%
||^{2}\right]  -\frac{\gamma+\alpha}{3}||\nabla\cdot v||^{2}\\
& -\left[  \gamma-\frac{\gamma+\alpha}{3}\right]  ||\nabla\cdot v||^{2}\\
& \geq(\gamma+\alpha)\left[  ||v_{1,x}||^{2}+||v_{2,y}||^{2}+||v_{3,z}%
||^{2}\right]  -\\
& \frac{\gamma+\alpha}{3}\left[  3||v_{1,x}||^{2}+3||v_{2,y}||^{2}%
+3||v_{3,z}||^{2}\right]  +\left[  \frac{\alpha-2\gamma}{3}\right]
||\nabla\cdot v||^{2}\\
& \geq\frac{\alpha-2\gamma}{3}||\nabla\cdot v||^{2}\geq0.
\end{align*}

\end{proof}

For all cases in the following theorem, the stability is proven via a formula
like
\[
E^{n+1}-E^{n}+2kD^{n+1}\leq2k(f,\widetilde{u}^{n+1}),
\]
which immediately implies stability (by summing over $n=1,N$) provided the
dissipation $D\geq0$ and the energy $E$\ is square of a norm of $u$. The $2d $
result and the one below for $\alpha\geq2\gamma$ in $3d$ are noted by Guermond
and Minev \cite{GM17} for method (\ref{eq:SGD1}).

\begin{proposition}
Consider the modular sparse grad-div method.

\textbf{2d case:} Assume $\gamma\geq0$. In $2d$ it is unconditional stable
when $\alpha\geq0$:
\begin{gather*}
\ \left[  ||u^{n+1}||^{2}+k\gamma(||u_{1,x}^{n+1}||^{2}+||u_{2,y}^{n+1}%
||^{2}\right] \\
-\left[  \Vert u^{n}\Vert^{2}+k\gamma(||u_{1,x}^{n}||^{2}+||u_{2,y}^{n}%
||^{2}\right]  +\\
\Vert\tilde{u}^{n+1}-u^{n}\Vert^{2}+||u^{n+1}-\tilde{u}^{n+1}||^{2}+2k\nu
\Vert\nabla\tilde{u}^{n+1}\Vert^{2}\leq2k(f^{n+1},\tilde{u}^{n+1}).
\end{gather*}

\textbf{3d case:} Suppose $2\gamma>\alpha\geq0.5\gamma$, then in $3d$ it is
unconditionally stable. It satisfies%
\[
E^{n+1}-E^{n}+2kD^{n+1}=2k(f,\widetilde{u}^{n+1}),
\]
where%
\begin{align*}
E^{n+1}  & =||u^{n+1}||^{2}+2k\left[  \frac{1}{2}||u^{n+1}||_{\mathcal{B}%
^{\ast}}^{2}+\frac{2\gamma-\alpha}{6}||\nabla\cdot u^{n+1}||^{2}\right] ,\\
D^{n+1}  & =\nu\Vert\nabla\tilde{u}^{n+1}\Vert^{2}+\frac{1}{2k}\left[
\Vert\tilde{u}^{n+1}-u^{n}\Vert^{2}+||u^{n+1}-\tilde{u}^{n+1}||^{2}\right] \\
& +\frac{1}{2}||u^{n+1}-u^{n}||_{\mathcal{B}^{\ast}}^{2}+\frac{2}{3}\left(
\alpha-0.5\gamma\right)  ||\nabla\cdot u^{n+1}||^{2}\\
& +\frac{2\gamma-\alpha}{6}||\nabla\cdot(u^{n+1}+u^{n})||^{2}.
\end{align*}

If $\alpha\geq2\gamma$, then in $3d$ it is unconditionally stable. It
satisfies%
\[
E^{n+1}-E^{n}+2kD^{n+1}=2k(f,\widetilde{u}^{n+1}),
\]
where%
\begin{align*}
E^{n+1}  & =||u^{n+1}||^{2}+k||u^{n+1}||_{\mathcal{B}}^{2},\\
D^{n+1}  & =\nu\Vert\nabla\tilde{u}^{n+1}\Vert^{2}+\frac{1}{2k}\left[
\Vert\tilde{u}^{n+1}-u^{n}\Vert^{2}+||u^{n+1}-\tilde{u}^{n+1}||^{2}\right] \\
& +\gamma||\nabla\cdot u^{n+1}||^{2}+\frac{1}{2}||u^{n+1}-u^{n}||_{\mathcal{B}%
}^{2}.
\end{align*}

\textbf{Control of }$\nabla\cdot u$ \textbf{in 3d}: Suppose $\alpha>0.5\gamma
$, $u^{0},\nabla u^{0}\in L^{2}(\Omega)$\ and $||f(t^{n})||_{-1}\leq F<\infty$
. Then if $u^{0}=0$\ we have for any $N$
\[
\frac{1}{N}\sum_{n=1}^{N}||\nabla\cdot u^{n}||^{2}\leq C\gamma^{-1}.
\]
For $u^{0}$ non-zero we have $\nabla\cdot u^{n}\rightarrow0$ as $\gamma
\rightarrow\infty$ in the discrete time averaged sense
\begin{equation}
\lim\sup_{N\rightarrow\infty}\frac{1}{N}\sum_{n=1}^{N}||\nabla\cdot
u^{n}||^{2}\leq C\gamma^{-1}.
\end{equation}
If $\alpha=0.5\gamma$ the above results hold with $||\nabla\cdot u^{n}||^{2}$
replaced by $||\nabla\cdot(u^{n+1}+u^{n})||^{2}$.
\end{proposition}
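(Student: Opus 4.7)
The plan is to derive a single master energy identity by testing each step with twice the corresponding velocity, then split the result into an energy increment plus a nonnegative dissipation, separately in each range of $\alpha$. First, test Step~1 with $v = 2k\tilde{u}^{n+1}$: the explicit antisymmetrization of the convective term kills the nonlinearity, the discrete incompressibility constraint with $q = p^{n+1}$ kills the pressure term, and the polarization $2(a-b,a) = \|a\|^2 - \|b\|^2 + \|a-b\|^2$ yields
\[
\|\tilde{u}^{n+1}\|^2 - \|u^n\|^2 + \|\tilde{u}^{n+1} - u^n\|^2 + 2k\nu\|\nabla\tilde{u}^{n+1}\|^2 = 2k(f^{n+1}, \tilde{u}^{n+1}).
\]
Test Step~2 with $v = 2k u^{n+1}$; apply the symmetric-bilinear polarization $2\mathcal{B}(u^n,u^{n+1}) = \mathcal{B}(u^n,u^n) + \mathcal{B}(u^{n+1},u^{n+1}) - \mathcal{B}(u^{n+1}-u^n,u^{n+1}-u^n)$, the analogue for $2(\tilde{u}^{n+1},u^{n+1})$, and the identity $2\mathcal{A}(v,v) - \mathcal{B}(v,v) = \mathcal{B}(v,v) + 2\gamma\|\nabla\cdot v\|^2$ (which follows from $\mathcal{A} = \mathcal{B} + \gamma(\nabla\cdot,\nabla\cdot)$). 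Adding the result to the Step~1 identity cancels $\|\tilde{u}^{n+1}\|^2$ and produces the master identity
\begin{align*}
& \|u^{n+1}\|^2 - \|u^n\|^2 + \|\tilde{u}^{n+1}-u^n\|^2 + \|u^{n+1}-\tilde{u}^{n+1}\|^2 + 2k\nu\|\nabla\tilde{u}^{n+1}\|^2 \\
& \quad + k\bigl[\mathcal{B}(u^{n+1},u^{n+1}) - \mathcal{B}(u^n,u^n) + \mathcal{B}(u^{n+1}-u^n, u^{n+1}-u^n)\bigr] \\
& \quad + 2k\gamma\|\nabla\cdot u^{n+1}\|^2 = 2k(f^{n+1},\tilde{u}^{n+1}).
\end{align*}

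Next I would perform the case analysis on $\alpha$. If $\alpha \geq 2\gamma$, the Lemma gives $\mathcal{B}(v,v) \geq 0$, so $E^{n+1} = \|u^{n+1}\|^2 + k\|u^{n+1}\|_{\mathcal{B}}^2$ yields the announced form immediately. If $0.5\gamma \leq \alpha < 2\gamma$ (the new 3d regime), $\mathcal{B}$ is indefinite; substitute $\mathcal{B}(v,v) = \mathcal{B}^{\ast}(v,v) - \tfrac{2\gamma-\alpha}{3}\|\nabla\cdot v\|^2$ (with $\mathcal{B}^{\ast}\geq 0$ by the Lemma) into every occurrence, so that the divergence-only residue
\[
-\tfrac{k(2\gamma-\alpha)}{3}\bigl[\|\nabla\cdot u^{n+1}\|^2 - \|\nabla\cdot u^n\|^2 + \|\nabla\cdot(u^{n+1}-u^n)\|^2\bigr] + 2k\gamma\|\nabla\cdot u^{n+1}\|^2
\]
can be rearranged via the parallelogram identity $\|\nabla\cdot(u^{n+1}-u^n)\|^2 = 2\|\nabla\cdot u^{n+1}\|^2 + 2\|\nabla\cdot u^n\|^2 - \|\nabla\cdot(u^{n+1}+u^n)\|^2$; a direct coefficient check matches the result exactly to the proposition's $E^{n+1}, D^{n+1}$, and the conditions $\alpha \geq 0.5\gamma$ and $\alpha < 2\gamma$ are precisely what makes the residual divergence coefficients $\tfrac{2(\alpha-0.5\gamma)}{3}$ and $\tfrac{2\gamma-\alpha}{6}$ in $D^{n+1}$ nonnegative. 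For the 2d case with $\alpha \geq 0$, I would instead bound $2k\mathcal{B}(u^n,u^{n+1})$ componentwise: writing $\mathcal{B}(u^n,u^{n+1}) = \alpha[(u_{1,x}^n,u_{1,x}^{n+1}) + (u_{2,y}^n,u_{2,y}^{n+1})] - \gamma[(u_{1,x}^n,u_{2,y}^{n+1}) + (u_{2,y}^n,u_{1,x}^{n+1})]$, four applications of $2|ab|\leq a^2+b^2$ give $|2k\mathcal{B}(u^n,u^{n+1})| \leq k\mathcal{A}(u^n,u^n) + k\mathcal{A}(u^{n+1},u^{n+1})$ for every $\alpha \geq 0$, and inserting this into the rearranged Step~2 identity recovers the 2d stability estimate.

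Finally, handle the forcing by $2k(f^{n+1},\tilde{u}^{n+1}) \leq k\nu\|\nabla\tilde{u}^{n+1}\|^2 + k\nu^{-1}\|f^{n+1}\|_{-1}^2$ (Cauchy-Schwarz, Poincar\'e, Young), absorbing the viscous contribution into $D^{n+1}$ and producing $E^{n+1}-E^n + kD^{n+1} \leq k\nu^{-1}F^2$ with $F = \sup_n\|f^n\|_{-1}$. Summing over $n=1,\ldots,N$ and dividing by $N$ gives $\limsup_N \tfrac{1}{N}\sum_n kD^n \leq k\nu^{-1}F^2$; extracting the $\tfrac{4k(\alpha-0.5\gamma)}{3}\|\nabla\cdot u^n\|^2$ piece of $D^n$ yields the announced time-averaged divergence bound, of order $\gamma^{-1}$ whenever $\alpha - 0.5\gamma$ scales linearly with $\gamma$. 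If $u^0 = 0$ then $E^0 = 0$, so the bound holds for every $N$ without the $\limsup$. In the borderline $\alpha = 0.5\gamma$ case the $\tfrac{2(\alpha-0.5\gamma)}{3}\|\nabla\cdot u^{n+1}\|^2$ dissipation collapses, but $\tfrac{k(2\gamma-\alpha)}{3}\|\nabla\cdot(u^{n+1}+u^n)\|^2$ remains strictly positive, which is the alternative form stated in the proposition. The main obstacle I anticipate is the delicate bookkeeping in the intermediate 3d case: after substituting $\mathcal{B} = \mathcal{B}^{\ast} - \tfrac{2\gamma-\alpha}{3}(\nabla\cdot,\nabla\cdot)$ and applying the parallelogram identity, matching the coefficients of $\|\nabla\cdot u^{n+1}\|^2$, $\|\nabla\cdot u^n\|^2$ and $(\nabla\cdot u^{n+1}, \nabla\cdot u^n)$ exactly to the proposition's $E^{n+1}, D^{n+1}$ is a careful but routine algebraic verification, with no new conceptual ingredients beyond the Lemma.
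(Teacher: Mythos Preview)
Your proposal is correct and follows essentially the same route as the paper: test Step~1 and Step~2 with the new velocities, polarize, add, and then in the delicate $0.5\gamma\le\alpha<2\gamma$ range replace $\mathcal{B}$ by the positive semidefinite $\mathcal{B}^\ast$ plus a divergence correction which is reorganized via a polarization/parallelogram identity on the $\nabla\cdot u$ terms. The only cosmetic differences are the order in which you apply the polarizations (you polarize $\mathcal{B}$ first and then substitute $\mathcal{B}=\mathcal{B}^\ast-\tfrac{2\gamma-\alpha}{3}(\nabla\cdot,\nabla\cdot)$, whereas the paper rewrites Step~2 to isolate $\mathcal{B}(u^{n+1}-u^n,u^{n+1})$ and then splits it as $\mathcal{B}^\ast(u^{n+1}-u^n,u^{n+1})+(II)$) and your 2d bound, which keeps general $\alpha\ge0$ and produces the energy $k(\gamma+\alpha)\bigl(\|u_{1,x}\|^2+\|u_{2,y}\|^2\bigr)$ rather than the paper's $k\gamma(\cdot)$ obtained after setting $\alpha=0$; both are valid stability estimates.
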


\begin{proof}
\textbf{The 2d case: }To shorten the $2d$ proof we set $\alpha=0$%
.\textbf{\ }The idea of the proof in $2d$ is simple. We perform a basic energy
estimate and subsume the inconvenient terms in ones that fit the desired
pattern. Set $v=\tilde{u}^{n+1}$, $q=p^{n+1}$ in Step 1. Use the polarization
identity and multiply by $2k$. We obtain
\[
\Vert\tilde{u}^{n+1}\Vert^{2}-\Vert u^{n}\Vert^{2}+\Vert\tilde{u}^{n+1}%
-u^{n}\Vert^{2}+2k\nu\Vert\nabla\tilde{u}^{n+1}\Vert^{2}=2k(f^{n+1},\tilde
{u}^{n+1}).
\]
Take $v=u^{n+1}$ in Step 2, use the polarization identity, multiply by $2$ and
rearrange. We obtain
\begin{gather*}
||u^{n+1}||^{2}-||\tilde{u}^{n+1}||^{2}+||u^{n+1}-\tilde{u}^{n+1}||^{2}+\\
2k\left\{
\begin{array}
[c]{c}%
\gamma(\nabla\cdot u^{n},\nabla\cdot u^{n+1})\\
+\gamma\left[  (u_{1,x}^{n+1}-u_{1,x}^{n},u_{1,x}^{n+1})+(u_{2,y}%
^{n+1}-u_{2,y}^{n},u_{2,y}^{n+1})\right]
\end{array}
\right\}  =0.
\end{gather*}
Add the last two equations. We obtain
\begin{gather}
||u^{n+1}||^{2}-\Vert u^{n}\Vert^{2}+\Vert\tilde{u}^{n+1}-u^{n}\Vert
^{2}+||u^{n+1}-\tilde{u}^{n+1}||^{2}+2k\nu\Vert\nabla\tilde{u}^{n+1}\Vert
^{2}\nonumber\\
2k\left\{  \gamma(\nabla\cdot u^{n},\nabla\cdot u^{n+1})+\gamma\left[
(u_{1,x}^{n+1}-u_{1,x}^{n},u_{1,x}^{n+1})+(u_{2,y}^{n+1}-u_{2,y}^{n}%
,u_{2,y}^{n+1}) \right]  \right\} \label{eq:2dcase}\\
=2k(f^{n+1},\tilde{u}^{n+1}).\nonumber
\end{gather}
Expanding the term inside braces ($\{\cdot\}$) algebraically gives%
\begin{align*}
\left\{  \cdot\right\}   &  =\gamma\left[  ||u_{1,x}^{n+1}||^{2}%
+||u_{2,y}^{n+1}||^{2}\right]  -\gamma\left[  (u_{1,x}^{n+1},u_{1,x}%
^{n})+(u_{2,y}^{n+1},u_{2,y}^{n})\right] \\
&  +\gamma(\nabla\cdot u^{n},\nabla\cdot u^{n+1})\\
&  =\gamma\left[  ||u_{1,x}^{n+1}||^{2}+||u_{2,y}^{n+1}||^{2}\right]
+\gamma\left[  (u_{2,y}^{n+1},u_{1,x}^{n})+(u_{1,x}^{n+1},u_{2,y}^{n})\right]
.
\end{align*}
Using the Cauchy-Schwarz-Young inequality in the last line of the above, then
yields
\[
\left\{  \cdot\right\}  \geq\frac{\gamma}{2}\left[  ||u_{1,x}^{n+1}%
||^{2}+||u_{2,y}^{n+1}||^{2}\right]  -\frac{\gamma}{2}\left[  ||u_{1,x}%
^{n}||^{2}+||u_{2,y}^{n}||^{2}\right]  .
\]
Inserting this for the term in braces in (\ref{eq:2dcase}) then implies%
\begin{gather*}
\left[  ||u^{n+1}||^{2}+k\gamma\left[  ||u_{1,x}^{n+1}||^{2}+||u_{2,y}%
^{n+1}||^{2}\right]  \right] \\
-\left[  \Vert u^{n}\Vert^{2}+k\gamma\left[  ||u_{1,x}^{n}||^{2}+||u_{2,y}%
^{n}||^{2}\right]  \right] \\
+\Vert\tilde{u}^{n+1}-u^{n}\Vert^{2}+||u_{h}^{n+1}-\tilde{u}^{n+1}||^{2}%
+2k\nu\Vert\nabla\tilde{u}^{n+1}\Vert^{2}\leq2k(f^{n+1},\tilde{u}^{n+1}).
\end{gather*}
Stability now follows by subsuming the $\tilde{u}^{n+1}$\ on the RHS into
$2k\nu\Vert\nabla\tilde{u}^{n+1}\Vert^{2}$\ on the LHS and summing over $n $.

\textbf{The 3d case: }In $3d$ there are too many inconvenient terms to simply
use\ the Cauchy-Schwarz-Young inequality as in $2d$ to establish the energy
estimate. Set $v=\tilde{u}^{n+1}$, $q=p^{n+1}$ in Step 1, multiply by $2k$ and
use the polarization identity to get
\begin{equation}
\left[  \Vert\tilde{u}^{n+1}\Vert^{2}-\Vert u^{n}\Vert^{2}\right]
+\Vert\tilde{u}^{n+1}-u^{n}\Vert^{2}+2k\nu\Vert\nabla\tilde{u}^{n+1}\Vert
^{2}=2k(f^{n+1},\tilde{u}^{n+1}).\label{eq:Step1}%
\end{equation}
We note that Step 2 can be rewritten as%
\[
(u^{n+1},v)+k\gamma(\nabla\cdot u^{n+1},\nabla\cdot v)=(\tilde{u}%
^{n+1},v)-k\mathcal{B}(u^{n+1}-u^{n},v).
\]
Take $v=u^{n+1}$ in this form of Step 2, use the polarization identity,
multiply by $2$ and rearrange. We obtain
\begin{gather}
||u^{n+1}||^{2}-||\tilde{u}^{n+1}||^{2}+||u^{n+1}-\tilde{u}^{n+1}%
||^{2}+2k\gamma||\nabla\cdot u^{n+1}\mathbf{||}^{2}+\nonumber\\
+2k\mathcal{B}(u^{n+1}-u^{n},u^{n+1})=0.\label{eq:Step3}%
\end{gather}
Add equation (\ref{eq:Step1}) and (\ref{eq:Step3}). We obtain
\begin{gather*}
||u^{n+1}||^{2}-\Vert u^{n}\Vert^{2}+\Vert\tilde{u}^{n+1}-u^{n}\Vert
^{2}+||u^{n+1}-\tilde{u}^{n+1}||^{2}+2k\nu\Vert\nabla\tilde{u}^{n+1}\Vert
^{2}\\
+2k\left\{  \gamma||\nabla\cdot u^{n+1}||^{2}+\mathcal{B}(u^{n+1}%
-u^{n},u^{n+1})\right\}  =2k(f^{n+1},\tilde{u}^{n+1}).
\end{gather*}

\textbf{3d case with} $\alpha\geq2\gamma$\ . This case implies $\mathcal{B}%
(v,v)\geq0$. Apply the polarization identity to the $\mathcal{B}-$semi-inner
product and collect terms. This gives the following%
\begin{gather*}
\left[  ||u^{n+1}||^{2}+k||u^{n+1}||_{\mathcal{B}}^{2}\right]  -\left[  \Vert
u^{n}\Vert^{2}+k||u^{n}||_{\mathcal{B}}^{2}\right] \\
+\Vert\tilde{u}^{n+1}-u^{n}\Vert^{2}+||u^{n+1}-\tilde{u}^{n+1}||^{2}%
+2k\nu\Vert\nabla\tilde{u}^{n+1}\Vert^{2}\\
+2k\left\{  \gamma||\nabla\cdot u^{n+1}||^{2}+0.5||u^{n+1}-u^{n}%
||_{\mathcal{B}}^{2}\right\}  =2k(f^{n+1},\tilde{u}^{n+1}).
\end{gather*}
Summing over $n=1,N$ yields stability when $\alpha\geq2\gamma$.

\textbf{3d case with} $2\gamma>\alpha\geq0.5\gamma$. We thus focus on the term
in braces in the last equation. First, recall (\ref{eq:BstarPositive}),
$\mathcal{B}^{\ast}(v,v)\geq0.$ Thus $\mathcal{B}^{\ast}$\ induces a semi-norm
$||\cdot||_{\mathcal{B}^{\ast}}$ to which a polarization identity can be
applied. Motivated by this observation, rewrite algebraically the term in
braces as%
\begin{gather*}
\left\{  \gamma||\nabla\cdot u^{n+1}||^{2}+\mathcal{B}(u^{n+1}-u^{n}%
,u^{n+1})\right\}  =\\
=\left[  \mathcal{B}(u^{n+1}-u^{n},u^{n+1})-\frac{\alpha-2\gamma}{3}%
(\nabla\cdot\left(  u^{n+1}-u^{n}\right)  ,\nabla\cdot u^{n+1})\right]  +\\
+\left(  \gamma||\nabla\cdot u^{n+1}||^{2}+\frac{\alpha-2\gamma}{3}%
(\nabla\cdot\left(  u^{n+1}-u^{n}\right)  ,\nabla\cdot u^{n+1})\right)
:=\left[  I\right]  +\left(  II\right)
\end{gather*}
We expand and apply the polarization identity to the term in brackets,
$\left[  I\right]  $, giving%
\begin{align*}
\left[  I\right]   & =\mathcal{B}^{\ast}(u^{n+1}-u^{n},u^{n+1})\\
& =\frac{1}{2}\left(  ||u^{n+1}||_{\mathcal{B}^{\ast}}^{2}-||u^{n}%
||_{\mathcal{B}^{\ast}}^{2}+||u^{n+1}-u^{n}||_{\mathcal{B}^{\ast}}^{2}\right)
.
\end{align*}
Recall that $2\gamma>\alpha\geq0.5\gamma$, so that the multipliers are
non-negative, $2\gamma-\alpha>0$ and $\alpha-0.5\gamma\geq0$. The term in
parentheses, $\left(  II\right)  $, is expanded as%
\[
\left(  II\right)  =\frac{\alpha+\gamma}{3}||\nabla\cdot u^{n+1}||^{2}%
+\frac{2\gamma-\alpha}{3}(\nabla\cdot u^{n},\nabla\cdot u^{n+1}).
\]
Applying the polarization identity in the form $x\cdot y=-0.5(|x|^{2}%
+|y|^{2}-|x+y|^{2})$\ to the term $(\nabla\cdot u^{n},\nabla\cdot u^{n+1}%
)$\ gives
\begin{align*}
\left(  II\right)   & =\frac{\alpha+\gamma}{3}||\nabla\cdot u^{n+1}%
||^{2}-\frac{2\gamma-\alpha}{6}\left\{  ||\nabla\cdot u^{n+1}||^{2}%
+||\nabla\cdot u^{n}||^{2}\right\} \\
& +\frac{2\gamma-\alpha}{6}||\nabla\cdot(u^{n+1}+u^{n})||^{2}.
\end{align*}
This is rearranged algebraically to read%
\begin{align*}
\left(  II\right)   & =\frac{2\gamma-\alpha}{6}\left[  ||\nabla\cdot
u^{n+1}||^{2}-||\nabla\cdot u^{n}||^{2}\right]  +\frac{2}{3}\left(
\alpha-0.5\gamma\right)  ||\nabla\cdot u^{n+1}||^{2}\\
& +\frac{2\gamma-\alpha}{6}||\nabla\cdot(u^{n+1}+u^{n})||^{2}.
\end{align*}
Putting all this together, we then have%
\[
E^{n+1}-E^{n}+2kD^{n+1}=2k(f,\widetilde{u}^{n+1}),
\]
where%
\begin{align*}
E^{n+1}  & =||u^{n+1}||^{2}+2k\left[  \frac{1}{2}||u^{n+1}||_{\mathcal{B}%
^{\ast}}^{2}+\frac{2\gamma-\alpha}{6}||\nabla\cdot u^{n+1}||^{2}\right] ,\\
D^{n+1}  & =\nu\Vert\nabla\tilde{u}^{n+1}\Vert^{2}+\frac{1}{2k}\left[
\Vert\tilde{u}^{n+1}-u^{n}\Vert^{2}+||u^{n+1}-\tilde{u}^{n+1}||^{2}\right] \\
& +\frac{1}{2}||u^{n+1}-u^{n}||_{\mathcal{B}^{\ast}}^{2}+\frac{2}{3}\left(
\alpha-0.5\gamma\right)  ||\nabla\cdot u^{n+1}||^{2}\\
& +\frac{2\gamma-\alpha}{6}||\nabla\cdot(u^{n+1}+u^{n})||^{2}.
\end{align*}
Since all terms are non-negative, stability follows by summing over $n$.

\textbf{Control of }$\nabla\cdot u$: The subtlety in concluding control of
$||\nabla\cdot u||$ from stability is that $E^{0}$ \& $D^{n}$ both depend on
the grad-div parameter $\gamma$. For this reason we obtain control in a time
averaged sense. Bound the RHS of the energy inequality by
\[
2k(f,\widetilde{u}^{n+1})\leq k\nu||\nabla\tilde{u}^{n+1}\Vert^{2}+k\nu
^{-1}F^{2}%
\]
and subsume the first term in $D$. This implies
\[
E^{n+1}-E^{n}+kD^{n+1}\leq k\nu^{-1}F^{2}.
\]
Summing this over $n=0,...,N-1$, dividing by $N$ and dropping the nonnegative
$E^{N}$\ term gives%
\[
k\frac{1}{N}\sum_{n=1}^{N}D^{n}\leq\frac{1}{N}E^{0}+k\nu^{-1}F^{2}.
\]
The RHS is bounded uniformly in $N$ so the limit superior as $N\rightarrow
\infty$ of the LHS exists. We thus have%
\begin{gather*}
\lim\sup_{N\rightarrow\infty}\frac{1}{N}\sum_{n=1}^{N}D^{n}\leq\nu^{-1}%
F^{2}\text{ }\\
\text{and}\\
\frac{1}{N}\sum_{n=1}^{N}D^{n}\leq\nu^{-1}F^{2}\text{ if }u^{0}=0.
\end{gather*}
The claimed result now follows since $D$\ contains (with a positive
multiplier) the term $\gamma||\nabla\cdot u^{n+1}||^{2}$ if $\alpha
>0.5\gamma,$ and if $\alpha=0.5\gamma$ the term $||\nabla\cdot(u^{n+1}%
+u^{n})||^{2}$.
\end{proof}


\section{Stability and control of $\nabla\cdot u$\ for flow between 3d offset
cylinders}

We consider a $3d$ rotational flow obstructed by an offset cylindrical
obstacle inside a cylinder. Let $r_{1}=1,r_{2}=0.1,(c_{1},c_{2})=(0.5,0)$ and
\[
\Omega_{1}=\{(x,y):x^{2}+y^{2}<r_{1}^{2}\;\text{and}\;(x-c_{1})^{2}%
+(y-c_{2})^{2}>r_{2}^{2}\}.
\]
The domain is $\Omega$ $=$ $\Omega_{1}\times(0,1)$, a cylinder of radius and
height one with a cylindrical obstacle removed, depicted with the mesh used in
Figure \ref{Fig1}.
\begin{figure}[htp]
\begin{center}
\includegraphics[
height=2.5381in,
width=3.7933in
]{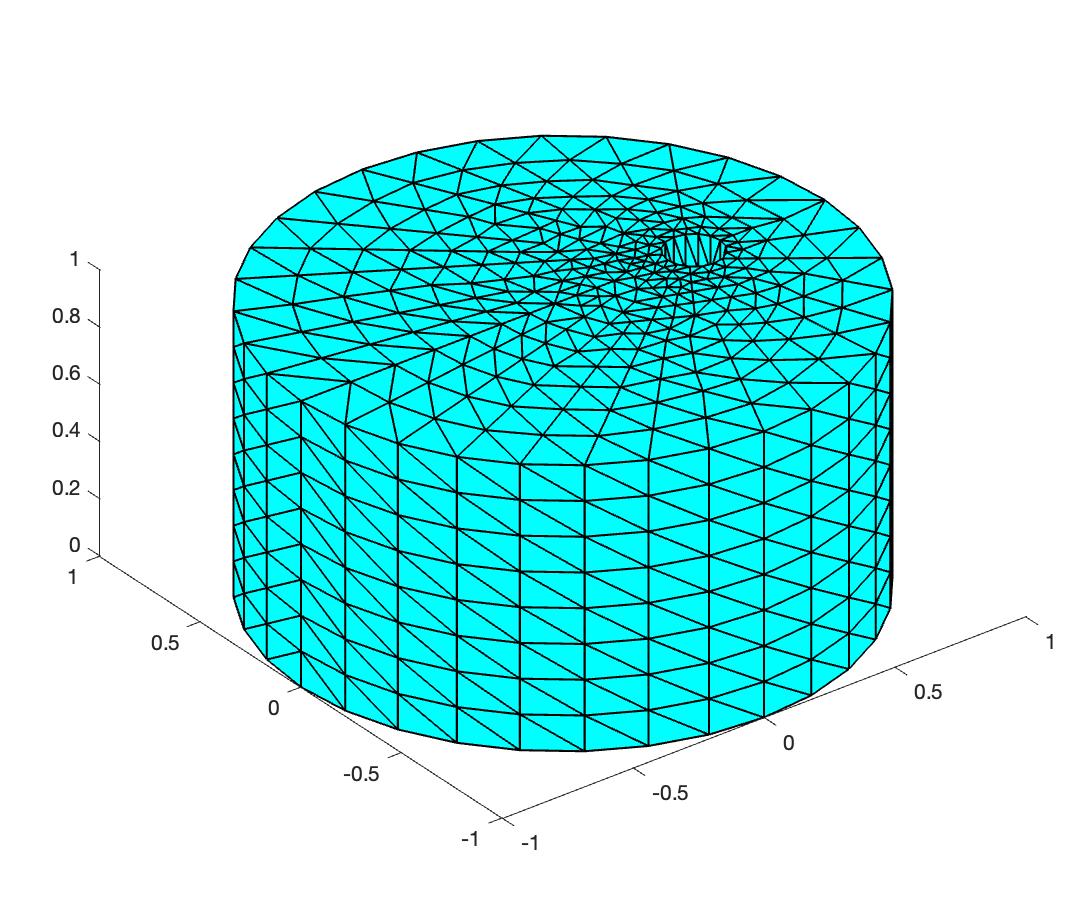}
\end{center}
\caption{Mesh used to test stability}%
\label{Fig1}%
\end{figure}

The flow is driven by a counter-clockwise rotational body force with $f=0$ on
the outer cylinder
\[
f(x,y,z,t)=\min\{t,1\}(-4y\ast(1-x^{2}-y^{2}),4x\ast(1-x^{2}-y^{2}%
),0)^{T},\;0\leq t\leq10.
\]
with no-slip boundary conditions, $u=0$, on boundaries. The space
discretization uses $P^{2}-P^{1}$ Taylor-Hood elements with $18972$ total
degrees of freedom in the velocity space and $2619$ total degrees of freedom
in the pressure space. This mesh in Figure \ref{Fig1} is insufficient to test
accuracy but suffices to test stability and control of $\nabla\cdot u$. The
flow rotates about the $z-$axis and interacts with the inner cylinder. We
start the test at rest, $u_{0}=(0,0,0)^{T}$, and choose the end time to be
$T=10$. The kinematic viscosity is $\nu=0.0001$ and the time step is $\Delta
t=0.05$.

We first tested if the extra $\alpha$\ term is necessary for stability. We
picked $\gamma=1.0,\alpha=0.5$ and $\alpha=0$ and $\gamma=\alpha=0$ (the
method with no grad-div term), solved and plotted the kinetic energy and
$||\nabla\cdot u||$ in Figure \ref{Fig2} below.
\begin{figure}[htp]
\begin{center}
\includegraphics[
height=2.5381in,
width=4.5933in
]{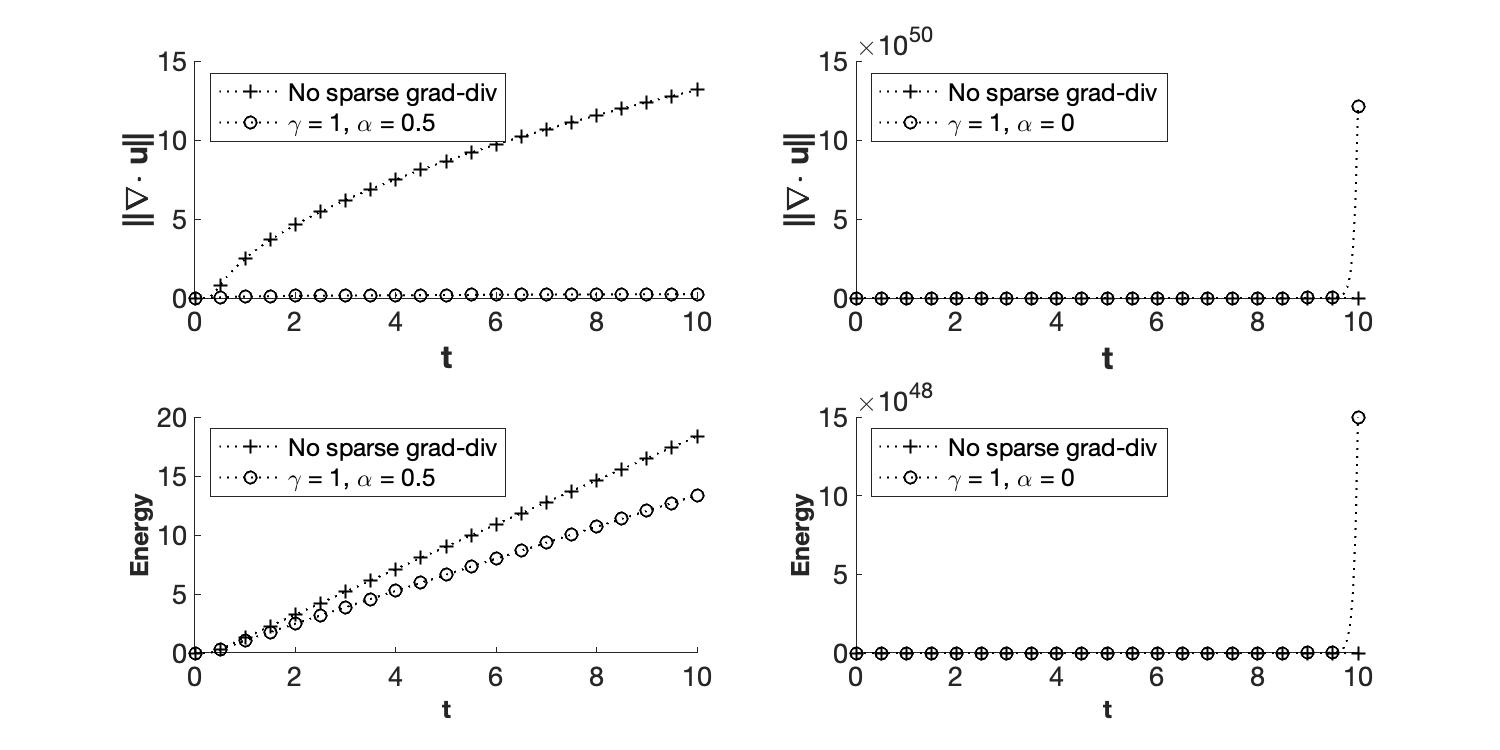}
\end{center}
\caption{Modular SGD. The left two plots are stable $\gamma$ and $\alpha$ pair
($\gamma=1, \alpha=0.5$) compared with no sparse grad-div term. The right two
are unstable $\gamma$ and $\alpha$ pair ($\gamma=1, \alpha=0$) compared with
no sparse grad-div term.}%
\label{Fig2}%
\end{figure}
The right hand side of the figure shows that the $\gamma=1.0,\alpha
=0.0$\ method is unstable while the $\gamma=1.0,\alpha=0.5$\ is stable. This
observed stability is consistent with the theoretical result.

The next question tested was whether $\alpha=0.5$ (for $\gamma=1$) is the
critical value for stability. To test this, we choose $\gamma=1$ and the range
of values $\alpha=0.3,0.4,0.48,0.49,0.5,0.6,0.7,1,2,3$, solved and plotted the
kinetic energy and $||\nabla\cdot u||$ vs time in Figure \ref{Fig3} and Figure
\ref{Fig4}.

\begin{figure}[htp]
\begin{center}
\includegraphics[
height=2.5381in,
width=4.5933in
]{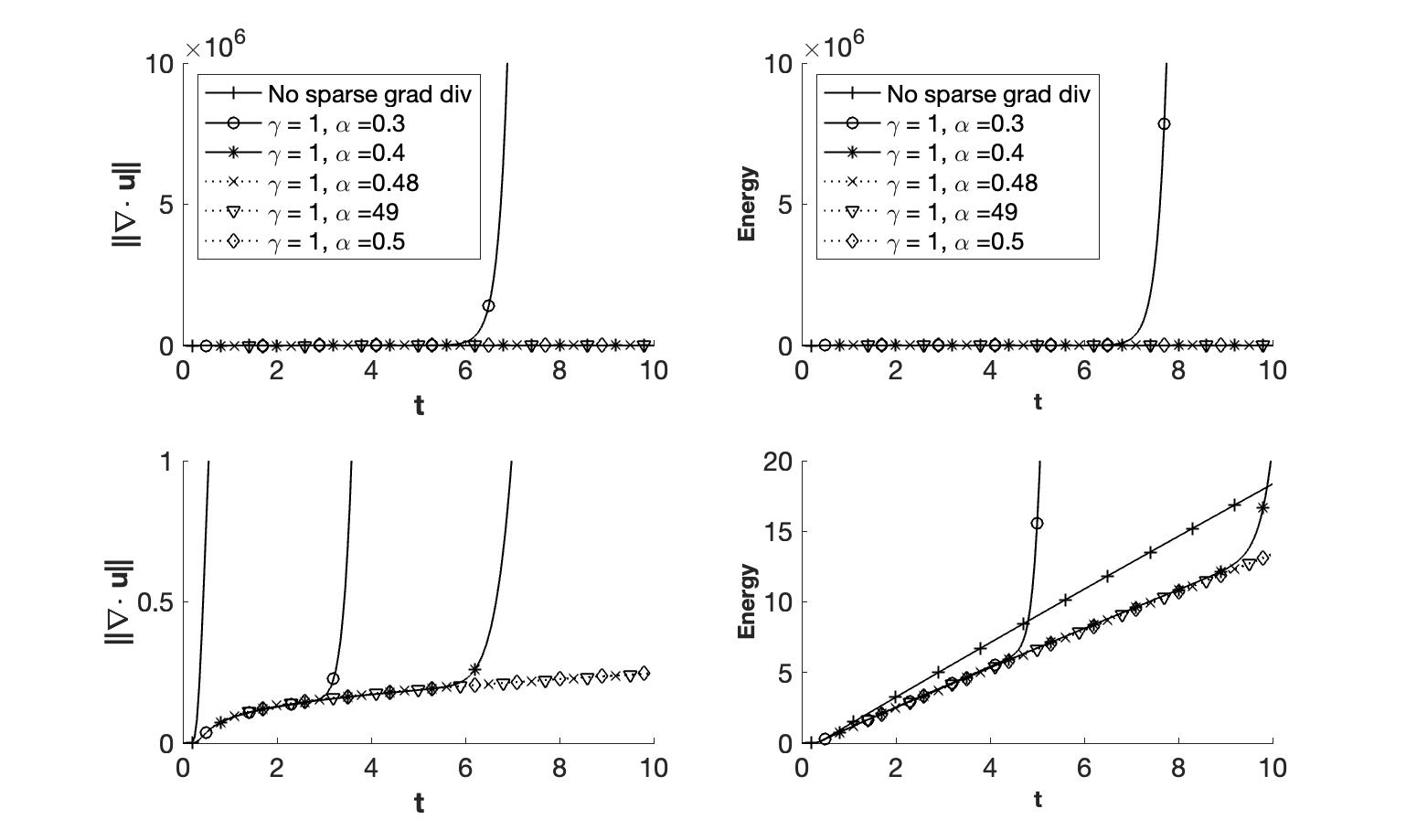}
\end{center}
\caption{Testing the $\alpha\leq0.5\gamma$, lower bound of $\alpha$ in
(\ref{eq:ModularSGD1}). The left two plots are $\|\nabla\cdot u\|$ vs time.
The right two plots are kinetic energy vs time. When $\alpha= 0.3, 0.4$,
results show instability.}%
\label{Fig3}%
\end{figure}
In Figure \ref{Fig4}, method (\ref{eq:ModularSGD1}) is stable for $\alpha
\geq0.5$, and in Figure \ref{Fig3}, for $\alpha<0.5$, the closer $\alpha$\ is
to the critical $0.5$ value, the longer time needed to see instability. No
instability over $0<t<10$ was observed for the nearly critical values
$\alpha=0.48$ \& $0.49.$ This could be because the time interval $0<t<10$ was
too short, because the derived value $\alpha=0.5$ is uniform in the viscosity
$\nu$, so actual stability is slightly better than proven or because some
sharpness was lost in the various inequalities. In further tests, we also
observe $\alpha=0.45$ instability starts near $t=21.5$. Similar behavior was
seen in the plots of $||\nabla\cdot u||$ in terms of control or loss of
control of $\nabla\cdot u$. The only evidence in the plots of $||\nabla\cdot
u||$ of non-sharpness of the analysis observed is that for $\alpha=0.5\gamma$,
control of $||\nabla\cdot u||$\ was observed. In contrast, the theorem
predicted control of averages over $2$ time levels for $\alpha=0.5\gamma$.
Please note that different scales were needed on the vertical axis.
\begin{figure}[htp]
\begin{center}
\includegraphics[
height=2.5381in,
width=4.5933in
]{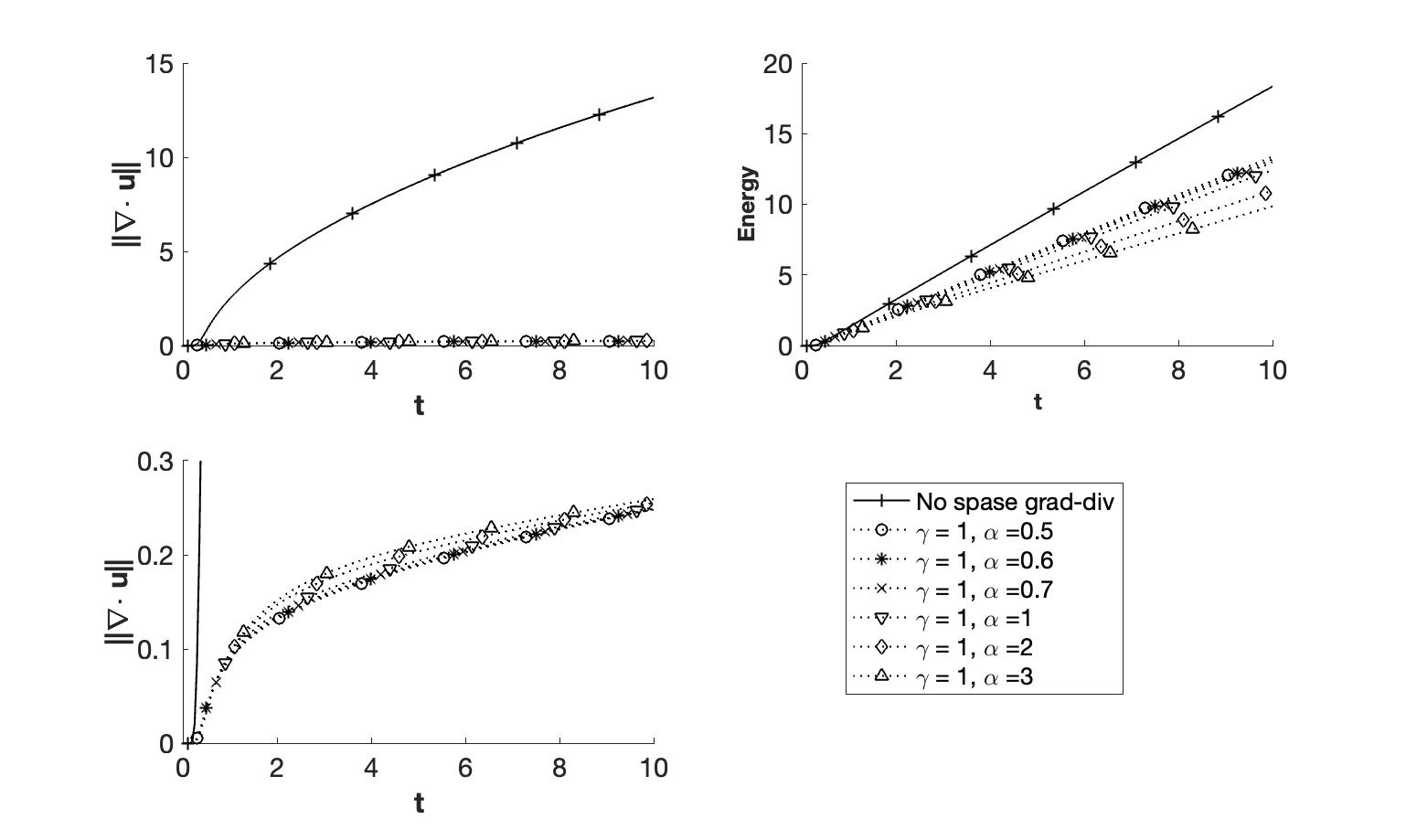}
\end{center}
\caption{Testing the $\alpha\geq0.5\gamma$, lower bound of $\alpha$ in
(\ref{eq:ModularSGD1}). The left two plots are $\|\nabla\cdot u\|$ vs time.
The right plot is kinetic energy vs time.}%
\label{Fig4}%
\end{figure}

Next, we compare the effect of $\gamma$ in (\ref{eq:ModularSGD1}) on
$||\nabla\cdot u||$. We choose $\gamma=0.1, 1,10,20,50,100$ and $\alpha
=0.5\ast\gamma$. For these values we solved and plotted the evolution of
$||\nabla\cdot u||$ and kinetic energy in Figure \ref{Fig5}.

The results in Figure \ref{Fig5} are consistent with $||\nabla\cdot u||$
decreasing as $\gamma$ increases. We also note that moderate values of
$\gamma$, e.g. $\gamma=0.1$ and $10$, in this test seem to be effective. We
conjecture that this is because $\nabla\cdot u=0$ is also required to be
orthogonal to the pressure space. We also present the time-average
$\|\nabla\cdot u\|^{2}$ and $\|\nabla\cdot u\|$ at end time $T=10$ for
different $\gamma$ in Table.\ref{tab:table1}. The convergence rate of average
$\|\nabla\cdot u\|^{2}$ about $-1$ is consistent with our analysis in the
control of $\nabla\cdot u$ in 3d.

We have also performed the above tests of (\ref{eq:SGD1}). The results were
similar so not detailed herein.
\begin{figure}[htp]
\begin{center}
\includegraphics[
height=2.5381in,
width=4.5933in
]{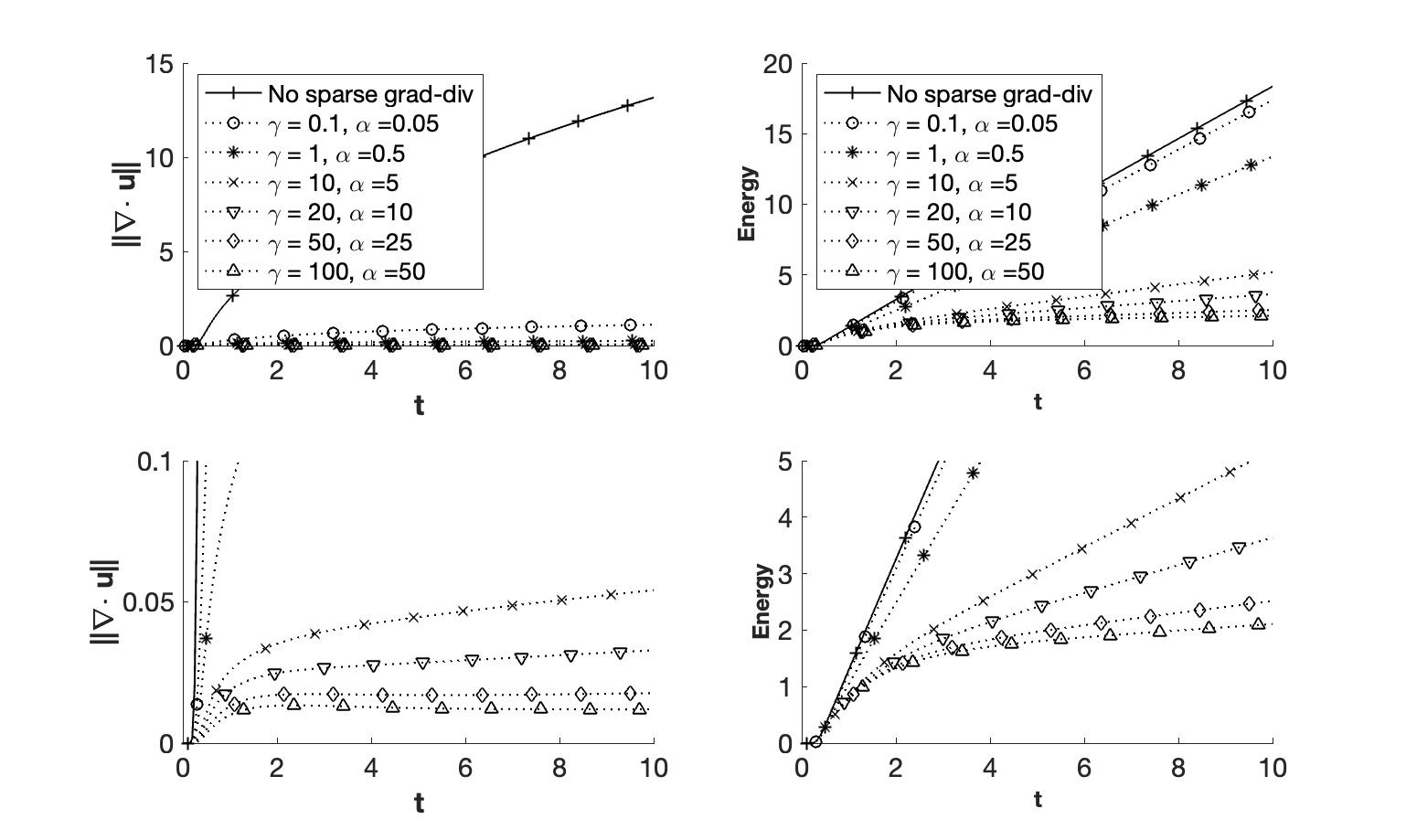}
\end{center}
\caption{Effect of $\gamma$ in (\ref{eq:ModularSGD1}) on velocity and
$\|\nabla\cdot u\|$. The left two plots are $\|\nabla\cdot u\|$ vs time. The
right two plots are energy vs time.}%
\label{Fig5}%
\end{figure}\begin{table}[pth]
\centering
\begin{tabular}
[c]{|c|c|c|c|c|}\hline
$\gamma$ & Avg($\|\nabla\cdot u\|^{2}$) & rate & $\|\nabla\cdot u(T)\|$ &
rate\\\hline
0.1 & 0.64305 & - & 1.1033 & -\\
1 & 0.033985 & -1.28 & 0.24826 & -0.65\\
10 & 0.0018455 & -1.27 & 0.054152 & -0.66\\
20 & 0.00074997 & -1.30 & 0.032871 & -0.72\\
50 & 0.00026663 & -1.13 & 0.017703 & -0.68\\
100 & 0.0001403 & -0.93 & 0.01195 & -0.57\\\hline
\end{tabular}
\caption{Time-average $\|\nabla\cdot u\|^{2}$ and $\|\nabla\cdot u\|$ at end
time $T$ for different $\gamma$ value when $\alpha= 0.5\gamma$.}%
\label{tab:table1}%
\end{table}


\section{Conclusions}

With $\alpha\geq0.5\gamma$ the algorithm presented is long time, nonlinearly
stable in $3d$ and fully uncouples all velocity components in the associated
linear system. For $\alpha<0.5\gamma$ the tests observed either instability or
loss of control of $\nabla\cdot u$\ (or both). The lower bound $0.5\gamma
$\ thus seems close enough to be sharp in the experiments to be useful. Open
problems include providing an analysis of stability in $3d$ for the sparse
grad-div method with $\alpha=0$ and $G^{\ast}$, the upper triangular part of
$G$, and for higher-order time discretizations.

\begin{acknowledgement}
The research presented herein was supported by NSF\ grant DMS 2110379.
\end{acknowledgement}

\section{Appendix: condition number estimation}

We give a brief analysis of the condition number of the coefficient matrix
occurring in Step 2: Given $\tilde{u}^{n+1}\in X$, find $u^{n+1}\in X$, for
all $v\in X$ satisfying:%
\[
(u^{n+1},v)+k\mathcal{A}(u^{n+1},v)=(\tilde{u}^{n+1},v_{h})+k\mathcal{B}%
(u^{n},v).
\]
As noted in the introduction, the coefficient matrix is block diagonal with
one block for each velocity \ component. Since all blocks have similar
structure and condition numbers, we estimate the condition number of the 1-1
block matrix. Let $\{\phi_{1},\cdot\cdot\cdot,\phi_{N}\}$\ denote a standard
finite element, nodal basis for the first component of the finite element
space, denoted $X_{1}$. Then the 1-1 block matrix we consider is%
\[
A_{ij}=(\phi_{i},\phi_{j})+k(\gamma+\alpha)(\frac{\partial}{\partial x}%
\phi_{i},\frac{\partial}{\partial x}\phi_{j}),i,j=1,\cdot\cdot\cdot,N.
\]
We assume the Poincar\'{e}-Friedrichs inequality holds in the x-direction,
$A1$ (excluding x-periodic boundary conditions) and make the following 2
standard assumptions, $A2, A3$, on $X_{1}$. These have been proven for many
spaces on quasi-uniform meshes.

A1 [Poincar\'{e}-Friedrichs]: For all $v\in X_{1},||v||\leq C||\frac{\partial
v}{\partial x}||.$

A2 [Inverse estimates]: For all $v\in X_{1},||\nabla v||\leq Ch^{-1}||v||.$

A3 [Norm equivalence]: We have $N=Ch^{-d},d=\dim(\Omega)=2$ or $3.$ For all
$v\in X_{1},v=\sum_{i=1}^{N}c_{i}\phi_{i}(x),||v||$ and $\sqrt{N^{-1}%
\sum_{i=1}^{N}c_{i}^{2}}$ are uniformly in $h$ equivalent norms.

For $|\cdot|$ the euclidean norm, we estimate $|A|$ and $|A^{-1}|$ below.
These two estimates show that%
\[
cond_{2}(A)\leq C\frac{1+k(\gamma+\alpha)Ch^{-2}}{1+k(\gamma+\alpha)}.
\]

For $|A^{-1}|$, let $Ac=b$\ then $|A^{-1}|^{2}=\max_{b}|A^{-1}b|^{2}/|b|^{2}$.
Let $M$ denote the finite element mass matrix $M_{ij}=(\phi_{i},\phi_{i})$.
Solve $Ma=b$. Define
\[
w=\sum_{i=1}^{N}c_{i}\phi_{i}(x),g=\sum_{i=1}^{N}a_{i}\phi_{i}(x).
\]
Then $Ac=b$ implies $w,g$ satisfy%
\[
(w,v)+k(\gamma+\alpha)(\frac{\partial}{\partial x}w,\frac{\partial}{\partial
x}v)=(g,v),\text{ for all }v\in X_{1}\text{.}%
\]
Setting $v=w$ and using A1 gives $\left(  1+k(\gamma+\alpha)C^{2}\right)
||w||\leq||g||$. Norm equivalence implies $||w||$ and $\sqrt{N^{-1}\sum
_{i=1}^{N}c_{i}^{2}}$ are uniformly in $h$ equivalent norms. Norm equivalence
applied twice implies $||w||$ and $\sqrt{N^{-1}\sum_{i=1}^{N}b_{i}^{2}}$ are
uniformly in $h$ equivalent norms. Thus
\[
|A^{-1}|\leq C\left(  1+k(\gamma+\alpha)\right)  ^{-1}.
\]
To estimate $|A|=\max_{c}|Ac|/|c|$, norm equivalence, A3, implies this is
equivalent to estimating above $||g||/||w||.$ We have $||g||=\max
_{v}(g,v)/||v||$. Then%
\begin{align*}
\frac{(g,v)}{||v||}  & =\frac{(w,v)+k(\gamma+\alpha)(\frac{\partial
w}{\partial x},\frac{\partial v}{\partial x})}{||v||}\\
& \leq||w||+k(\gamma+\alpha)\frac{||\frac{\partial w}{\partial x}%
||||\frac{\partial v}{\partial x}||}{||v||}\text{ and by A2}\\
& \leq||w||+k(\gamma+\alpha)Ch^{-2}||w||.
\end{align*}
Thus, $||g||/||w||\leq1+k(\gamma+\alpha)Ch^{-2}.$

\end{document}